\newtheorem{thm}{Theorem}[section]
\newtheorem{cor}[thm]{Corollary}
\newtheorem{lem}[thm]{Lemma}
\newtheorem{prop}[thm]{Proposition}
\newtheorem{defn}[thm]{Definition}
\newtheorem{rem}[thm]{Remark}
\newtheorem{exm}[thm]{Example}
\numberwithin{equation}{section}
\newcommand{\duer}{\mathbin{\raisebox{3pt}{\varhexstar}\kern-3.70pt{\rule{0.15pt}{4pt}}}\,}
\journal{ }
\begin{document}

\begin{frontmatter}

\title{Representation up to Homotopy of Hom-Lie Algebroids}
\author{S. Merati}

\author{M. R. Farhangdoost\corref{mycorrespondingauthor}}
\cortext[mycorrespondingauthor]{Corresponding author}

\address{Department of Mathematics, College of Sciences,\\Shiraz University, P.O. Box 71457-
44776, Shiraz, Iran\\
farhang@shirazu.ac.ir}

\begin{abstract}
A hom-Lie algebroid is a vector bundle together with a Lie algebroid like structure which is twisted by a homomorphism.
In this paper we use the idea of representations up to homotopy of Lie algebroids to construct a same structure for hom-Lie algebroids and we will explain how representations up to homotopy of length 1 are related to
extensions of hom-Lie algebroids.
\end{abstract}
\begin{keyword}
Hom-Lie algebroids\sep Representations\sep Representation up to homotopy\sep Extensions.
\MSC[2010] 20L05\sep  22A22\sep  18D05.
\end{keyword}

\end{frontmatter}

\linenumbers
\section{Introduction}
Hom-Lie algebras was introduced by Hartwig, Larsson, and Silvestrov
in \cite{10} as part of a study of deformations of the Witt and Virasoro algebras. In
a hom-Lie algebra, the Jacobi identity is twisted by a linear map, called the hom-
Jacobi identity. In recent years, hom-structures including hom-Lie algebras, $n$-hom-Lie algebras, hom-algebras, hom-coalgebras,
hom-modules and hom-Hopf modules were widely studied, \cite{12,8,7}. The concept of hom-Lie algebroid was introduced by Camille Laurent-Gengoux and Joana Teles in \cite{11}. We study the theory of representation of hom-Lie algebroids as a generalization of Lie algebroids representation.

A significant problem with the usual notion of Lie algebroid representation is the lack of a
well-defined adjoint representation. The effort to resolve this problem has led to a number of
proposed generalizations of the notion of Lie algebroid representation, with the
most popular being that of representation up to homotopy \cite{2,9,3,4,5}.

At first let us recall some notion of Lie algebroids and their representations.
\begin{defn}
\cite{1} A Lie algebroid over a manifold $M$ consists of a vector
bundle $A\rightarrow M$ together with a bundle map $\rho : A \rightarrow TM$ and a Lie bracket $[.,.]$
on the space $\Gamma(A)$, satisfying the Leibniz identity
$$[X, fY] = f[X, Y] + \mathcal{L}_{\rho(X)}(f)Y,$$
for all $X, Y \in\Gamma(A)$ and all $f\in C^{\infty}(M)$.
\end{defn}
Bundle map $\rho$ is called anchor and induces a Lie homomorphism from $\Gamma(A)$ to $\mathcal{X}(M)$.

There is an associated De-Rham-cohomology complex $\Omega(A)=\Gamma(\bigwedge A^*)$ for given Lie algebroid $A\rightarrow M$ with degree $1$ operator
\begin{align*}
d_A\omega(X_1\cdots,X_p)&=\sum_{i<j}(-1)^{i+j}\omega([X_i,X_j],X_1\cdots,\hat{X_i},\cdots,\hat{X_j},\cdots,X_{p})\\
&+\sum_i (-1)^{i+1}\mathcal{L}_{\rho(X_i)}\omega(X_1,\cdots,\hat{X_i},\cdots, X_p).
\end{align*}
The operator $d_A$ is a differential and satisfies the derivation rule, i.e.
$$d_A(\omega\eta)=d_A(\omega)\eta+(-1)^p\omega d_A(\eta),$$
for all $\omega\in\Omega^p(A)$ and $\eta\in\Omega^q(A)$.
\begin{defn}\label{rep}
\cite{4} Let $A$ be a Lie algebroid over $M$, a representation of $A$ is a vector bundle $E\rightarrow M$ together with an $\mathbb{R}$-bilinear map $\nabla:\Gamma(A)\times\Gamma(E)\rightarrow\Gamma(E)$ such that
$$\nabla_{fX}s=f\nabla_Xs,\quad\nabla_X fs=f\nabla_{X}s+\mathcal{L}_{\rho(X)}(f)(s)$$
and
$$\nabla_{[X,Y]}s=\nabla_X\nabla_Ys-\nabla_Y\nabla_Xs,$$
for all $f\in C^{\infty}(M)$, $X,Y\in \Gamma(A)$ and $s\in\Gamma(E).$
\end{defn}
The $\mathbb{R}$-bilinear map $\nabla$ in Definition \ref{rep} is called a flat $A$-connection on $E$.
\begin{prop}
\cite{2} Given a Lie algebroid $A$ and a vector bundle $E$ over $M$, there is a 1-1
correspondence between flat $A$-connections $\nabla$ on $E$ and degree 1 operators $d_{\nabla}$ on
$\Omega(A;E)=\Gamma(\bigwedge A^*\otimes E)$ which
satisfy the derivation rule and $d^2_\nabla = 0$.
\end{prop}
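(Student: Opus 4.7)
The plan is to prove the correspondence by exhibiting explicit inverse constructions in each direction. In one direction, given a flat connection $\nabla$, I would define $d_\nabla$ by the Koszul-type formula
\begin{align*}
d_\nabla\omega(X_0,\ldots,X_p) &= \sum_{i<j}(-1)^{i+j}\omega([X_i,X_j],X_0,\ldots,\hat{X_i},\ldots,\hat{X_j},\ldots,X_p)\\
&\quad + \sum_{i}(-1)^{i}\nabla_{X_i}\bigl(\omega(X_0,\ldots,\hat{X_i},\ldots,X_p)\bigr),
\end{align*}
for $\omega\in\Omega^p(A;E)$. In the other direction, given a degree 1 derivation $D$ with $D^2=0$ on $\Omega(A;E)$, I would set $\nabla_X s := (Ds)(X)$ for $s\in\Gamma(E)=\Omega^0(A;E)$ and $X\in\Gamma(A)$.

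For the forward direction the main technical points are: (i) $d_\nabla$ is $C^\infty(M)$-tensorial in each slot after alternation, which reduces to checking how the right-hand side transforms when one $X_i$ is multiplied by $f\in C^\infty(M)$; the Leibniz property $\nabla_X(fs) = f\nabla_Xs + \mathcal{L}_{\rho(X)}(f)s$ and the Lie algebroid Leibniz identity conspire to cancel the extra terms exactly as in the scalar case $d_A$. (ii) The derivation rule $d_\nabla(\omega\wedge\eta) = d_A\omega\wedge\eta + (-1)^{|\omega|}\omega\wedge d_\nabla\eta$ for $\omega\in\Omega(A)$, $\eta\in\Omega(A;E)$ follows by a direct computation on decomposable elements. (iii) The identity $d_\nabla^2\eta = 0$ is where flatness enters: a brief computation on $s\in\Gamma(E)$ yields
$$d_\nabla^2 s (X,Y) = \nabla_X\nabla_Y s - \nabla_Y\nabla_X s - \nabla_{[X,Y]}s,$$
so $d_\nabla^2=0$ on $\Omega^0(A;E)$ is exactly flatness; the derivation rule then propagates this vanishing to all of $\Omega(A;E)$.

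For the backward direction, $\nabla_X s := (Ds)(X)$ is $C^\infty$-linear in $X$ because $Ds\in\Omega^1(A;E)$ is $C^\infty$-linear, and the Leibniz identity in $s$ follows from $D(fs) = df\otimes s + fDs$ together with the fact that $D$ restricted to $\Omega(A)\subset\Omega(A;E)$ must agree with $d_A$ (a consequence of the derivation rule applied to $Df$ for $f\in C^\infty(M)$, using the uniqueness of $d_A$). Flatness of the resulting $\nabla$ is immediate from the formula for $d_\nabla^2 s$ above applied to $D^2=0$.

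The main obstacle is showing that the two constructions are mutually inverse, i.e.\ that any $D$ satisfying the hypotheses coincides with the $d_\nabla$ built from $\nabla_X s := (Ds)(X)$. This requires a uniqueness argument: locally, every $\eta\in\Omega^p(A;E)$ can be written as a finite sum $\sum_\alpha \omega_\alpha\otimes s_\alpha$ with $\omega_\alpha\in\Omega^p(A)$ and $s_\alpha\in\Gamma(E)$, and the derivation rule together with $D|_{\Omega(A)}=d_A$ and $D|_{\Omega^0(A;E)}=\nabla$ forces
$$D(\omega_\alpha\otimes s_\alpha) = d_A\omega_\alpha\otimes s_\alpha + (-1)^p\omega_\alpha\wedge\nabla s_\alpha,$$
which is precisely the Koszul formula. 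Hence $D=d_\nabla$, establishing the bijection.
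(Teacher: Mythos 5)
The paper gives no proof of this proposition---it is quoted from Abad--Crainic \cite{2} as background---so there is nothing internal to compare against; your argument is the standard one and is essentially the same strategy the authors later deploy for the hom-version in Theorem~\ref{main1} (Koszul formula in one direction, $\nabla_X s:=(Ds)(X)$ in the other, uniqueness from the derivation rule plus the fact that $\Gamma(E)$ generates $\Omega(A;E)$ as an $\Omega(A)$-module), and it is correct. Two small points worth tightening: the propagation of $d_\nabla^2=0$ from $\Gamma(E)$ to all of $\Omega(A;E)$ uses not the derivation rule directly but its consequence that $d_\nabla^2$ is $\Omega(A)$-linear, namely $d_\nabla^2(\omega\wedge\eta)=d_A^2\omega\wedge\eta+\omega\wedge d_\nabla^2\eta=\omega\wedge d_\nabla^2\eta$; and the clause about $D$ restricted to $\Omega(A)$ agreeing with $d_A$ is unnecessary, since in the derivation rule $d_\nabla(\omega\wedge\eta)=d_A\omega\wedge\eta+(-1)^{|\omega|}\omega\wedge d_\nabla\eta$ the operator $d_A$ already appears explicitly on the scalar factor, so $D(fs)=d_Af\otimes s+fDs$ is immediate.
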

The structure of this paper is as follows:
\begin{itemize}
\item In section two, we study some properties, new examples of hom-algebroids and a relation between morphisms of two hom-Lie algebroids and a hom-Lie sub-algebroids of their direct sum.
\item The third section is devoted to the concept of representation of hom-Lie algebroids.
\item
In last section we introduce the hom-Lie algebroid representations up to homotopy and surveyed some examples. The main result of this paper is Theorem \ref{up}, which is based on Proposition \ref{upto}.
\end{itemize}
\section{Hom-Lie Algebroid}
At first let us to recall the definition of hom-Lie algebroids.
\begin{defn}\cite{11}
A hom-Lie algebroid is a quintuple $( A \to M, \theta,  [.,.]_A,\rho,\Theta) $, where
$A \to M$ is a vector bundle over a manifold $M$, $\theta:M\to M$ is a smooth map,
$[.,.]_A: \Gamma(A) \otimes \Gamma(A) \to \Gamma(A)$ is a bilinear map, called bracket,
$\rho : A \to  TM $ is a vector bundle morphism, called anchor, and
$\Theta:\Gamma(A) \to \Gamma(A) $ is a linear endomorphism  of $\Gamma(A) $  such that
\begin{enumerate}
  \item  $\Theta(fX) = \theta^* (f) \Theta (X)$,
for all $X \in \Gamma(A), f \in C^\infty(M)$;
  \item
the triple $(\Gamma(A),[.,.]_A, \Theta)$ is a hom-Lie algebra;
   \item the following hom-Leibniz identity holds:
 $$ [X,fY]_A= \theta^*(f) [X,Y]_A + \mathcal{L}_{\rho(X)} (f) \Theta(Y), \quad \mbox{for all } X, Y \in \Gamma(A), f \in C^\infty(M). $$
    \item[4.] $(\Theta,\theta^*)$ is a representation of $(\Gamma(A),[.,.]_A, \Theta)$ on $C^\infty(M)$.
\end{enumerate}

\end{defn}
The hom-Lie algebroid $A$ is transitive, if $\rho$ is fiberwise surjective and $\theta$ is a submersion. The hom-Lie algebroid $A$ is regular, if $\rho$ and $\Theta$ are of locally constant rank, and totally intransitive, if $\rho=0$.

A morphism $\varphi$ between two hom-Lie algebroid $A$ and $A'$ on same base $M$ is a vector bundle morphism $\varphi:A\rightarrow A'$ such that $\rho'\circ\varphi=\rho$, $\Theta'\circ\varphi^*=\varphi^*\circ\Theta$ and
$\varphi([X,Y]_A)=[\varphi(x),\varphi(Y)]_A$, for all $X,Y\in\Gamma(A)$.
\begin{prop}
Let $A$ be a hom-Lie algebroid on $M$, and let $U\subset M$ be an open subset, such that $\theta(U)\subset U$. Then the bracket $[.,.]_A:\Gamma(A)\times\Gamma(A)\rightarrow\Gamma(A)$ restricts to $[.,.]_U:\Gamma_U(A)\times\Gamma_U(A)\rightarrow\Gamma_U(A)$ and makes $A_U$ a hom-Lie algebroid on $U$, called the restriction of $A$ to $U$.
\end{prop}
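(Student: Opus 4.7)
The plan is to exploit the hom-Leibniz identity to show that the bracket
$[.,.]_A$ and the endomorphism $\Theta$ are local operators on $\theta$-invariant
open sets, and then to transfer the hom-Lie algebroid structure to $U$ by
extending local sections. The key locality claim is: if $X\in\Gamma(A)$
vanishes on an open set $W$ and $p\in W$ satisfies $\theta(p)\in W$, then
$[X,Y]_A(p)=0$ for all $Y\in\Gamma(A)$, and $\Theta(X)(p)=0$. To prove it, I
would pick $f\in C^\infty(M)$ with $f\equiv 1$ outside $W$ and $f\equiv 0$
on an open neighborhood of $\{p,\theta(p)\}$ contained in $W$. Since
$X|_W=0$, $X=fX$ as global sections; the skew-symmetrised hom-Leibniz identity
then gives
$$[X,Y]_A(p)=[fX,Y]_A(p)=\theta^*(f)(p)\,[X,Y]_A(p)-\mathcal{L}_{\rho(Y)}(f)(p)\,\Theta(X)(p),$$
and both coefficients vanish at $p$ because $f(\theta(p))=0$ and because $f$
is constant near $p$. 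Axiom~(1) yields the analogous statement
$\Theta(X)(p)=\theta^*(f)(p)\,\Theta(X)(p)=0$ for $\Theta$.

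Next I would use this locality to define the restricted structure. For
$X',Y'\in\Gamma_U(A)$ and $p\in U$, the hypothesis $\theta(U)\subset U$ places
$\theta(p)$ inside $U$, so one can pick $\chi\in C^\infty(M)$ with support in
$U$ and $\chi\equiv 1$ on an open neighborhood $V_p\subset U$ of
$\{p,\theta(p)\}$, and form the global extensions $X:=\chi X'$, $Y:=\chi Y'$.
Setting
$$[X',Y']_U(p):=[X,Y]_A(p),\qquad \Theta_U(X')(p):=\Theta(X)(p),$$
the locality claim applied to the difference of any two such extensions
shows these values are independent of $\chi$. Continuity of $\theta$ ensures
that the same $(X,Y)$ also realises the definition at all nearby points of
$p$, so both $[X',Y']_U$ and $\Theta_U(X')$ are smooth sections over $U$.
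The anchor $\rho$ restricts to a bundle map $\rho|_U:A_U\to TU$ automatically.

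Finally, each axiom of a hom-Lie algebroid is a pointwise identity between
sections, and can be checked at any $p\in U$ by choosing a common global
extension of the sections involved and invoking the corresponding identity
on $M$. In this way $C^\infty(U)$-compatibility of $\Theta_U$, the hom-Lie
algebra structure on $\Gamma_U(A)$, the hom-Leibniz identity, and the
representation condition of $(\Theta_U,(\theta|_U)^*)$ on $C^\infty(U)$ all
transfer from $M$ to $U$. I expect the only delicate step to be the locality
claim itself: the hom-Leibniz rule introduces the coefficient $\theta^*(f)$,
which forces the cut-off $f$ to vanish at $\theta(p)$ as well as at $p$, and
this is precisely what makes the hypothesis $\theta(U)\subset U$ indispensable
--- without it, $\theta(p)$ could leave $U$ and no global extension with the
required vanishing properties would be available.
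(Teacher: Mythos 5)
Your proposal is correct and takes essentially the same route as the paper: locality of $[.,.]_A$ and $\Theta$ on a $\theta$-invariant open set via the hom-Leibniz identity applied to $X=fX$ for a suitable cutoff $f$, followed by the routine transfer of the axioms to $U$. You are in fact more careful than the paper's own argument, whose cutoff is only required to satisfy $f(x)=0$ and $f\equiv 1$ off $U$ --- conditions that do not by themselves annihilate either $f(\theta(x))$ or $\mathcal{L}_{\rho(X)}(f)(x)$ --- whereas your requirement that $f$ vanish on a whole neighbourhood of $\{p,\theta(p)\}$ makes both terms vanish exactly as claimed.
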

\begin{proof}
It is enough to show that $[X,Y]_A$ vanishes on $U$, where $X,Y\in\Gamma(A)$ and $Y$ vanishes on $U$. Let $x\in U$ and $f\in C^{\infty}(M)$ such that $f(x)=0$ and $f(p)=1$ for all $p\in M\backslash U$. Then
\begin{align*}
[X,Y](x)&=[X,fY](x)\\
&=f(\theta(x))[X,Y](x)+\mathcal{L}_{\rho(X)}f(x)\Theta(Y)(x)\\
&=0.
\end{align*}
\end{proof}
\begin{exm}
Let $M$ be a manifold and $(\mathfrak{g},[.,.],\alpha)$ be a hom-Lie algebra on $TM\oplus(M\times\mathfrak{g})$ define an anchor $\rho=\pi_1:TM\oplus(M\times\mathfrak{g})\rightarrow TM$, a bracket
$$[X\oplus v,Y\oplus w]=[X,Y]_{TM}\oplus(X(w)-Y(v)+[v,w]_{\mathfrak{g}}),$$
and vector bundle map
\begin{align*}
\Theta :\Gamma(TM\oplus(M\times\mathfrak{g}))&\rightarrow \Gamma( TM\oplus(M\times\mathfrak{g}))\\
f\oplus g&\mapsto f\oplus \alpha\circ g
\end{align*}
over $M$. Then $TM\oplus(M\times\mathfrak{g})$ is a hom-Lie algebroid, called the trivial hom-Lie algebroid on $M$ with structure hom-Lie algebra $\mathfrak{g}$. $TM\oplus(M\times\mathfrak{g})$ is a transitive hom-Lie algebroid.
\end{exm}
Let $A$ be a vector bundle over $M$. We denote by $\mathcal{D}(A)$ the collection of all isomorphism on $A$ over $id_M$.
\begin{defn}
A hom-Lie algebra bundle is a vector bundle $\xymatrix{L\ar[r]^{\pi}&M}$ together with $\Theta\in\mathcal{D}(L)$ and a field of hom-Lie algebras bracket $[.,.]:\Gamma L\times\Gamma L\rightarrow\Gamma L$ with respect to $\Theta$, that is $[.,.]_m:L_m\times L_m\rightarrow L_x$ is a hom-Lie algebra bracket with respect to $\Theta_m$, for all $m\in M$, Such that $L$ admit an atlas $\{\varphi_i:U_i\times \mathfrak{g}\rightarrow L_{U_i}\}$ in which each $\varphi_{i,m}$ is a hom-Lie algebra isomorphism.
\end{defn}
\begin{prop}
Let $A$ and $A'$ be hom-Lie algebroids on $M$ and let $A$ be transitive. Let $A\oplus_{TM}A'$ denote the inverse image vector bundle over $M$ of the diagram
\begin{equation*}
\xymatrix{A\oplus_{TM}A'\ar[r]\ar[d]&A'\ar[d]^{\rho'}\\
A\ar[r]_{\rho}&TM.
}
\end{equation*}
Let $\bar{\rho}:A\oplus_{TM}A'\rightarrow TM$ be the diagonal composition and define a bracket on $\Gamma(A\oplus_{TM}A')$ by
$$[X\oplus X',Y\oplus Y']_{A\oplus_{TM}A'}=[X,Y]_A\oplus[X',Y']_A',$$
and vector bundle map $\Theta\oplus\Theta':\Gamma(A\oplus_{TM} A')\rightarrow \Gamma(A\oplus_{TM} A').$
Then $A\oplus_{TM}A'$ is a hom-Lie algebroid on $M$ and the diagram above is now a pullback in the category of hom-Lie algebroids over $M$.
We call $A\oplus_{TM}A'$ the direct sum hom-Lie algebroid of $A$ and $A'$ over $TM$.
\end{prop}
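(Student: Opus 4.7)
The plan is to split the proof into three parts: (i) $A\oplus_{TM}A'$ is genuinely a vector bundle over $M$; (ii) the proposed anchor $\bar{\rho}$, bracket and endomorphism $\Theta\oplus\Theta'$ assemble into a hom-Lie algebroid structure; and (iii) the diagram has the universal property of a pullback in the category of hom-Lie algebroids over $M$.

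For (i), this is exactly where the transitivity hypothesis is used. Since $\rho:A\to TM$ is fibrewise surjective and $\theta$ is a submersion, the fibred product $A\oplus_{TM}A'$ has constant rank and inherits smooth local trivialisations from local splittings $A|_U\cong\ker(\rho)|_U\oplus TM|_U$, realising it as a smooth vector subbundle of $A\oplus A'$. In particular, its sections are precisely pairs $(X,X')\in\Gamma(A)\times\Gamma(A')$ with $\rho(X)=\rho'(X')$.

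For (ii), I would verify the four axioms of Definition of a hom-Lie algebroid componentwise. The preliminary check is that the proposed bracket lands in $\Gamma(A\oplus_{TM}A')$: in any hom-Lie algebroid the anchor intertwines the bracket with the vector-field commutator, so $\rho([X,Y]_A)=[\rho(X),\rho(Y)]_{TM}=[\rho'(X'),\rho'(Y')]_{TM}=\rho'([X',Y']_{A'})$. Once this is established, hom-skew-symmetry and hom-Jacobi for $(\Gamma(A\oplus_{TM}A'),[\cdot,\cdot],\Theta\oplus\Theta')$ split summand-wise from the corresponding axioms for $A$ and $A'$, the identity $(\Theta\oplus\Theta')(f(X\oplus X'))=\theta^{*}(f)(\Theta\oplus\Theta')(X\oplus X')$ is immediate, and the representation of the combined hom-Lie algebra on $C^{\infty}(M)$ is inherited. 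The hom-Leibniz identity reduces to the identities for $A$ and $A'$, crucially using the compatibility $\rho(X)=\rho'(X')=\bar{\rho}(X\oplus X')$ so that the two Lie-derivative contributions agree and combine into $\mathcal{L}_{\bar{\rho}(X\oplus X')}(f)(\Theta\oplus\Theta')(Y\oplus Y')$.

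For (iii), suppose $B\to M$ is a hom-Lie algebroid equipped with morphisms $\phi:B\to A$ and $\phi':B\to A'$ satisfying $\rho\circ\phi=\rho'\circ\phi'$. The universal property of pullbacks in the category of vector bundles over $M$ produces a unique bundle map $\psi:B\to A\oplus_{TM}A'$ given by $\psi(b)=\phi(b)\oplus\phi'(b)$, and it automatically respects the anchors. That $\psi$ intertwines brackets and the twisting maps follows by projecting onto each summand and invoking the corresponding properties of $\phi$ and $\phi'$. I expect the main obstacle to be step (i): confirming that the set-theoretic fibred product is actually a smooth vector bundle of constant rank is exactly where the transitivity of $A$ is indispensable, since without fibrewise surjectivity of $\rho$ the fibre dimension of $A\oplus_{TM}A'$ would generically jump. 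The remaining steps are essentially routine componentwise bookkeeping.
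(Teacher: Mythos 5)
The paper states this proposition without any proof, so I can only judge your argument on its own terms. Your three-part architecture (vector bundle structure, componentwise verification of the axioms, universal property) is a sensible skeleton, and parts (i) and (iii) are fine: fibrewise surjectivity of $\rho$ is indeed what makes the fibred product a constant-rank subbundle of $A\oplus A'$, and the universal property is routine once the structure is in place.

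The gap is in the ``preliminary check'' of part (ii). You assert that in any hom-Lie algebroid $\rho([X,Y]_A)=[\rho(X),\rho(Y)]_{TM}$. That identity is a theorem for ordinary Lie algebroids (derived from Leibniz plus Jacobi), but it fails for hom-Lie algebroids: running the analogous derivation with the hom-Leibniz and hom-Jacobi identities only yields the twisted relation encoded in axiom 4 of the definition, namely $\mathcal{L}_{\rho([X,Y]_A)}(\theta^{*}f)=\mathcal{L}_{\rho(\Theta(X))}\mathcal{L}_{\rho(Y)}(f)-\mathcal{L}_{\rho(\Theta(Y))}\mathcal{L}_{\rho(X)}(f)$. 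Consequently the closure of $\Gamma(A\oplus_{TM}A')$ under the proposed bracket --- i.e.\ the equality $\rho([X,Y]_A)=\rho'([X',Y']_{A'})$ --- does not follow from your argument; to extract it from the twisted relation you would additionally need $\rho(\Theta(X))=\rho'(\Theta'(X'))$ and a way to cancel the $\theta^{*}$, which is genuinely delicate because for a submersion $\theta$ the operator $\mathcal{L}_{Z}\circ\theta^{*}$ determines only $\theta_{*}Z$, not $Z$. For the same reason you have skipped a second closure condition entirely: $(\Theta\oplus\Theta')(X\oplus X')=\Theta(X)\oplus\Theta'(X')$ must itself be a section of the fibred product, i.e.\ $\rho(\Theta(X))=\rho'(\Theta'(X'))$, and this requires a compatibility of the anchors with the twist maps (such as $\rho\circ\Theta=\theta_{*}\circ\rho$, which appears in Laurent-Gengoux--Teles but is not among the axioms listed in this paper) that your proof never invokes. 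These two closure statements are the real content of the proposition; the componentwise verification of hom-Jacobi, hom-Leibniz and the $C^{\infty}(M)$-representation axiom is, as you say, routine once they are established.
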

\begin{rem}
Note that the trivial hom-Lie algebroid $TM\oplus(M\times\mathbf{g})$ is not a direct sum hom-Lie algebroid of $TM$ and $M\times\mathfrak{g}$.
\end{rem}
A sub-vector bundle $B\rightarrow M$ of $A\rightarrow M$ is a hom-Lie sub-algebroid of $( A, \theta,  [.,.]_A,\rho,\Theta) $ if $\Theta(\Gamma(B))\subseteq \Gamma(B)$ and
$$[X,Y]_A\in\Gamma(B),$$ for all $X,Y\in \Gamma(B)$.
\begin{prop}
If $( A, \theta,  [.,.]_A,\rho,\Theta) $ and $( A', \theta,  [.,.]_{A'},\rho',\Theta') $ are two hom-Lie algebroid over $M$ and $\varphi:A\rightarrow A'$ is a vector bundle morphism, then $\varphi$ is a hom-Lie algebroid morphism if and only if
$$\mathcal{G}_{\varphi}=\{(x,\varphi(x))|x\in A\}\subseteq A\oplus_{TM}A'$$ is a hom-Lie sub-algebroid of $(A\oplus_{TM}A',[.,.]_{A\oplus_{TM}A'},\bar{\rho},\Theta\oplus\Theta')$.
\end{prop}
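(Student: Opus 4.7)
The plan is to prove both implications by direct verification, exploiting the canonical identification $\Gamma(A)\cong\Gamma(\mathcal{G}_\varphi)$ given by $X\mapsto X\oplus\varphi(X)$, so that the three conditions defining a hom-Lie algebroid morphism translate one-to-one into the three conditions needed for $\mathcal{G}_\varphi$ to be a hom-Lie sub-algebroid of the direct-sum algebroid.

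First I would address the very inclusion $\mathcal{G}_\varphi\subseteq A\oplus_{TM}A'$, which is not vacuous: by construction of the pullback bundle, a pair $(x,x')$ belongs to $A\oplus_{TM}A'$ precisely when $\rho(x)=\rho'(x')$. Hence $\mathcal{G}_\varphi\subseteq A\oplus_{TM}A'$ is equivalent to $\rho'\circ\varphi=\rho$. This observation handles the anchor-compatibility half of the statement in both directions at once.

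Next I would verify compatibility with $\Theta$. An arbitrary section of $\mathcal{G}_\varphi$ has the form $X\oplus\varphi(X)$ with $X\in\Gamma(A)$, and
\[
(\Theta\oplus\Theta')(X\oplus\varphi(X))=\Theta(X)\oplus\Theta'(\varphi(X)).
\]
This lies in $\Gamma(\mathcal{G}_\varphi)$ iff $\Theta'(\varphi(X))=\varphi(\Theta(X))$, i.e.\ iff $\Theta'\circ\varphi=\varphi\circ\Theta$ (which, passed to sections, is $\Theta'\circ\varphi^*=\varphi^*\circ\Theta$). Similarly, applying the direct-sum bracket to two sections of $\mathcal{G}_\varphi$,
\[
[X\oplus\varphi(X),Y\oplus\varphi(Y)]_{A\oplus_{TM}A'}=[X,Y]_A\oplus[\varphi(X),\varphi(Y)]_{A'},
\]
and this lies in $\Gamma(\mathcal{G}_\varphi)$ iff $[\varphi(X),\varphi(Y)]_{A'}=\varphi([X,Y]_A)$, which is exactly the bracket-preservation condition for a hom-Lie algebroid morphism.

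Assembling these three equivalences yields both directions simultaneously: if $\varphi$ is a hom-Lie algebroid morphism all three conditions hold, so $\mathcal{G}_\varphi$ is closed under $\Theta\oplus\Theta'$ and under $[\,,\,]_{A\oplus_{TM}A'}$, and conversely. There is no real obstacle here; the only mild subtlety is recognising that the set-theoretic inclusion $\mathcal{G}_\varphi\subseteq A\oplus_{TM}A'$ already encodes the anchor condition, so that the remaining two sub-algebroid axioms are in exact bijection with the remaining two axioms for a hom-Lie algebroid morphism.
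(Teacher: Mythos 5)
Your proof is correct and follows essentially the same direct verification on sections as the paper: closure of $\mathcal{G}_\varphi$ under the direct-sum bracket and under $\Theta\oplus\Theta'$ is in exact bijection with bracket preservation and $\Theta$-compatibility of $\varphi$. Your additional observation that the set-theoretic inclusion $\mathcal{G}_\varphi\subseteq A\oplus_{TM}A'$ already encodes $\rho'\circ\varphi=\rho$ is a small but genuine improvement, since the paper's proof never checks the anchor condition in either direction.
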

\begin{proof}
Let $\varphi:(A,[.,.]_A,\rho,\Theta)\rightarrow(A',[.,.]_{A'},\rho',\Theta')$ be a hom-Lie algebroid morphism. Any section of sub-bundle $\mathcal{G}_{\varphi}$ is same as a $(X,\varphi(X))$, where $X$ is a section of $A$. For any $X,Y\in\Gamma(A)$,
\begin{align*}
[(X,\varphi(X)),(Y,\varphi(Y))]_{A\oplus_{TM}A}&=([X,Y]_A,[\varphi(X),\varphi(Y)]_{A'})\\
&=([X,Y]_A,\varphi([X,Y]_A))\in\Gamma(\mathcal{G}_{\varphi}).
\end{align*}
Also
$$\Theta\oplus\Theta'(X,\varphi(X))=(\Theta(X),\Theta'(\varphi(X)))=(\Theta(X),\varphi(\Theta(X)))\in\Gamma(\mathcal{G}_{\varphi}).$$
Thus $\mathcal{G}_{\varphi}$ is a hom-Lie sub-algebroid of $A\oplus_{TM}A'$.

Conversely, if $\mathcal{G}_{\varphi}\subseteq A\oplus_{TM}A'$ is a hom-Lie sub-algebroid of
$$(A\oplus_{TM}A,[.,.]_{A\oplus_{TM}A},\bar{\rho},\Theta\oplus\Theta'),$$ then we have
$$[(X,\varphi(X)),(Y,\varphi(Y))]_{A\oplus_{TM}A}=([X,Y]_A,[\varphi(X),\varphi(Y)]_{A'})\in\Gamma(\mathcal{G}_{\varphi}),$$
which implies that $\varphi([X,Y]_A)=[\varphi(X),\varphi(Y)]_{A'}$, for all $X,Y\in\Gamma(A)$.

Furthermore $\Theta\oplus\Theta'(\Gamma(\mathcal{G}_{\varphi}))\subseteq\Gamma(\mathcal{G}_{\varphi})$, yields that
$$\Theta\oplus\Theta'(X,\varphi(X))=(\Theta(X),\Theta'(\varphi(X))),$$
which implies that $\Theta'\circ\varphi^*(X)=\varphi^*\circ\Theta(X)$. Therefore $\varphi$ is a morphism of hom-Lie algebroids.
\end{proof}

\section{Representation up to homotopy of hom-Lie algebroids}
Let $A$ be a hom-Lie algebroid over $M$. An $A$-connection on a vector bundle $E$ over $M$ with respect to an isomorphism $\alpha\in \mathcal{D}(E)$ is an $\mathbb{R}$-bilinear map $\nabla^{\alpha}:\Gamma(A)\times\Gamma(E)\rightarrow\Gamma(E)$, $(X,s)\mapsto\nabla^{\alpha}_X s$ such that
$$\nabla^{\alpha}_fX s=f\nabla^{\alpha}_X s,\qquad\nabla^{\alpha}_X fs=\theta^*f\nabla^{\alpha}_X s+\mathcal{L}_{\rho(X)}(f) s,$$
and $$\nabla^{\alpha}_{\Theta(X)}\alpha(s)=\alpha(\nabla^{\alpha}_X s)$$
for all $f\in C^{\infty}(M)$, $s\in\Gamma(E)$ and $X\in\Gamma(A)$.

The $A$-curvature of $\nabla^{\alpha}$ is the tensor given by
\begin{align*}
R_{\nabla^{\alpha}}(X,Y)s:=\nabla_{\Theta(X)}\nabla_{Y}s-\nabla_{\Theta(Y)}\nabla_X s-\nabla_{[X,Y]}\alpha(s)
\end{align*}
for $X,Y\in\Gamma(A)$ and $s\in\Gamma(E)$. The $A$-connection $\nabla^{\alpha}$ with respect to $\alpha$ is called flat, if $R_{\nabla^{\alpha}}=0$.
\begin{prop}
Let $\xymatrix{A\ar[r]&M}$ be a vector bundle which $A_m$ be a hom-Lie algebra by hom-function $\Theta_m$, where $\Theta\in\mathcal{D}(A)$ and $m\in M$. Then $\xymatrix{A\ar[r]&M}$ is a hom-Lie algebra bundle if and only if it is admits a flat $A$-connection $\nabla^{\Theta}$ over $A$ with respect to $\Theta$.
\end{prop}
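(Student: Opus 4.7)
The plan is to generalize the classical statement that a vector bundle with fibrewise Lie algebra structure is a Lie algebra bundle if and only if it admits a flat bracket-preserving linear connection. I would prove the two implications separately: from an atlas of hom-Lie algebra isomorphisms I manufacture a flat connection by declaring the constant local sections to be parallel, and from a flat $A$-connection I manufacture the atlas by parallel transport of a basis.

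For the \emph{only if} direction, suppose $A\to M$ is a hom-Lie algebra bundle with trivializing atlas $\{\varphi_i:U_i\times\mathfrak{g}\to A_{U_i}\}$, each $\varphi_{i,m}$ a hom-Lie algebra isomorphism. Define $\nabla^{\Theta}$ on $U_i$ by $\nabla^{\Theta}_X\varphi_i(\cdot,v)=0$ for $v\in\mathfrak{g}$ and extend by the Leibniz rule $\nabla^{\Theta}_X(fs)=\theta^{*}(f)\nabla^{\Theta}_X s+\mathcal{L}_{\rho(X)}(f)s$. Since the transition functions of the atlas are fibrewise hom-Lie algebra automorphisms of $\mathfrak{g}$, which may be arranged to be locally constant on sufficiently small overlaps, the local connections glue into a global $A$-connection. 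The flatness $R_{\nabla^{\Theta}}=0$ and the identity $\nabla^{\Theta}_{\Theta(X)}\Theta(s)=\Theta(\nabla^{\Theta}_X s)$ are verified on the basic parallel sections $\varphi_i(\cdot,v)$, where both sides vanish, and then extended to all sections by $C^{\infty}$-linearity in $X$ and the derivation property in $s$.

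For the \emph{if} direction, assume $\nabla^{\Theta}$ is flat. Fix $m_0\in M$, pick a basis $e_1,\dots,e_k$ of $A_{m_0}$, and extend it on a contractible neighbourhood $U$ by $\nabla^{\Theta}$-parallel transport to obtain a local frame $s_1,\dots,s_k\in\Gamma_U(A)$ of parallel sections; flatness ensures path independence. Compatibility $\nabla^{\Theta}_{\Theta(X)}\Theta(s_i)=\Theta(\nabla^{\Theta}_X s_i)=0$ shows $\Theta(s_i)$ is parallel, hence equal to the parallel extension of $\Theta_{m_0}(e_i)$, so $\Theta$ acts on the frame $(s_i)$ by the constant endomorphism $\Theta_{m_0}$. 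The curvature identity
$$0=R_{\nabla^{\Theta}}(X,Y)s_i=\nabla_{\Theta(X)}\nabla_Y s_i-\nabla_{\Theta(Y)}\nabla_X s_i-\nabla_{[X,Y]}\Theta(s_i)$$
combined with the hom-Jacobi identity then forces $[s_i,s_j]$ to be parallel, hence to coincide with the parallel extension of $[e_i,e_j]_{m_0}$. The trivialization $\varphi:(m,\sum v^i e_i)\mapsto \sum v^i s_i(m)$ is thus a fibrewise hom-Lie algebra isomorphism and supplies the required atlas.

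The main obstacle I anticipate is the construction of enough parallel sections via parallel transport along an $A$-connection: unlike an ordinary linear connection on $A$, here transport is directed by sections of $A$ and is detected in $M$ only through the anchor $\rho$. The Frobenius-type argument giving path independence therefore requires the anchor to sweep out enough directions locally, which in the regular or transitive case is automatic but which must be handled more carefully in general, perhaps by restricting to a leaf of the distribution $\rho(A)$ and then transporting transversally using $\Theta$. The twist $\theta^{*}$ in the Leibniz rule contributes no serious difficulty since it evaluates trivially on the constant-coefficient sections used in both constructions.
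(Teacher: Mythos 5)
Your overall strategy matches the paper's: parallel transport of a frame to build the trivializing atlas from a flat connection, and a construction of a flat connection from local trivializations in the other direction. However, there are two genuine gaps. First, in the \emph{only if} direction your gluing rests on the claim that the transition functions of the atlas ``may be arranged to be locally constant on sufficiently small overlaps.'' This is false in general: the transition functions take values in the automorphism group of the typical fibre $(\mathfrak{g},[\cdot,\cdot],\alpha)$, and arranging a locally constant cocycle is precisely the condition that the bundle be flat as a principal bundle, which is strictly stronger than local triviality and is essentially what you are trying to prove exists. With non-constant transition functions your locally defined connections (constant sections parallel) simply do not agree on overlaps, so there is nothing to glue. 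The paper instead takes local connections with vanishing $A$-curvature and combines them with a partition of unity; that at least avoids the false claim, although one should still verify that the convex combination remains flat rather than merely asserting it.

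Second, in the \emph{if} direction you conclude that $[s_i,s_j]$ is parallel from ``the curvature identity combined with the hom-Jacobi identity,'' but the curvature $R_{\nabla^{\Theta}}(X,Y)s=\nabla_{\Theta(X)}\nabla_Y s-\nabla_{\Theta(Y)}\nabla_X s-\nabla_{[X,Y]}\Theta(s)$ involves only the connection and the algebroid bracket on the directions $X,Y$; it says nothing about the fibrewise bracket applied to the transported sections $s_i$. To conclude that parallel transport is a hom-Lie algebra isomorphism one needs the additional hypothesis that $\nabla^{\Theta}_X$ is a $\Theta$-derivation of the fibre bracket, i.e.\ something like $\nabla_X[s,t]=[\nabla_X s,\Theta(t)]+[\Theta(s),\nabla_X t]$, which is not part of the definition of a flat $A$-connection. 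The paper's proof asserts exactly this identification (``the vanishing of the curvature means that $\nabla^{\Theta}$ acts as $\Theta$-derivations of the hom-Lie algebra fibers''), so you share this gap with the source, but it is a gap nonetheless and you should not present the implication as following from flatness alone. Your concern about the anchor failing to sweep out all directions of $M$ (so that parallel transport cannot reach every nearby point) is well founded and is likewise left unaddressed by the paper; resolving it would require restricting to the regular or transitive case, or reformulating the statement for totally intransitive $A$ where the notion of parallel transport degenerates entirely.
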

\begin{proof}
Let there exists a $\nabla^{\Theta}$ with $R_{\nabla^{\Theta}}=0$. We need to prove that the hom-Lie algebra structure on the fibers is locally trivial. The vanishing of the curvature means that $\nabla^{\Theta}$ acts as $\Theta$-derivations of the hom-Lie algebra fibers
$$\nabla_{\Theta(X)}\nabla_{Y}s+\nabla_{\Theta(Y)}\nabla_X s=\nabla_{[X,Y]}\alpha(s).$$
Since $\Theta$ is over $id_M$, $\Theta$-derivations are infinitesimal automorphism, we deduce that parallel transports induced by $\nabla^{\Theta}$ are hom-Lie algebra isomorphism, providing the necessary hom-Lie algebra bundle trivialization.
For the converse, assume that $A$ is a hom-Lie algebra bundle, so $A$ is locally trivial then locally one can choose connections with zero $A$-curvature. Then one can use partitions of unity to construct a global connection with the same property.
\end{proof}
\begin{defn}\label{reph}
Let $A$ be a hom-Lie algebroid over $M$ and $E$ be a vector bundle over $M$. A representation of $A$ with respect to $\alpha\in\mathcal{D}(E)$ on $E$, is a flat $A$-connection $\nabla^{\alpha}$ on $E$.
\end{defn}
In the case where $A=\mathfrak{g}$ is a hom-Lie algebra, Definition \ref{reph} recovers the standard notion of representation of hom-Lie algebras.
Given an $A$-connection $\nabla^{\alpha}$ with respect to $\alpha\in\mathcal{D}(E)$ on $E$, the space of $\Theta$-compatible $E$-valued differential forms, $$\Omega_{\alpha}(A;E):=\{\omega\in\Gamma(\bigwedge A^*\otimes E)|\alpha\circ\omega=\Theta^*\omega\}$$ has an induced operator $d_{\nabla^{\alpha}}$ given by following formula:
\begin{align}\label{deri}
d_{\nabla^{\alpha}}\omega(X_1,\cdots,X_{p+1})&=\sum_{i<j}(-1)^{i+j}\omega([X_i,X_j]_A,\Theta(X_1)\cdots,\hat{X_i},\cdots,\hat{X_j},\cdots,\Theta(X_{p+1}))\nonumber\\
&+\sum_i(-1)^{i+1}\nabla^{\alpha}_{\Theta^{p}(X_i)}\omega(X_1,\cdots,\hat{X_i},\cdots,X_{p+1}).
\end{align}
\begin{lem}\label{lemder}
 In general, $d_{\nabla^{\alpha}}$ satisfies the $\alpha$-derivation rule:
\begin{equation}
\Theta^*d_{\nabla^{\alpha}}\omega=\alpha\circ d_{\nabla^{\alpha}}\omega,
\end{equation}
and
\begin{equation}\label{der}
d_{\nabla^{\alpha}}(\omega\eta)=d_{\nabla^{\alpha}}(\omega)\Theta^*(\eta)+(-1)^p\Theta^*(\omega)d_{\nabla^{\alpha}}(\eta)
\end{equation}
for any $\omega\in\Omega^p_{\alpha}(A;E)$ and $\eta\in\Omega^q_{\alpha}(A;E)$.
\end{lem}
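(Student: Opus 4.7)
The plan is to verify both identities by direct computation from the defining formula (\ref{deri}), relying only on the structural compatibilities already at hand: the $\Theta$-compatibility $\alpha\circ\omega=\Theta^*\omega$ that defines membership in $\Omega^\bullet_\alpha(A;E)$, the twist relation $\nabla^\alpha_{\Theta(X)}\alpha(s)=\alpha(\nabla^\alpha_X s)$, and the multiplicativity of $\Theta$ with respect to $[.,.]_A$ coming from the hom-Lie algebra structure on $\Gamma(A)$.

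For the $\alpha$-derivation property, I would apply $\alpha$ to $d_{\nabla^\alpha}\omega(X_1,\ldots,X_{p+1})$ term by term. In the bracket sum, $\alpha\,\omega([X_i,X_j]_A,\Theta(X_1),\ldots,\Theta(X_{p+1}))$ becomes $\Theta^*\omega$ of the same arguments by $\alpha\circ\omega=\Theta^*\omega$; pushing the outer $\Theta^*$ inward produces $\omega(\Theta([X_i,X_j]_A),\Theta^2(X_1),\ldots,\Theta^2(X_{p+1}))$, which by $\Theta([X,Y]_A)=[\Theta(X),\Theta(Y)]_A$ is precisely the matching summand of $(\Theta^*d_{\nabla^\alpha}\omega)(X_1,\ldots,X_{p+1})$. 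In the connection sum the identity $\alpha(\nabla^\alpha_{\Theta^p(X_i)}\omega(\cdots))=\nabla^\alpha_{\Theta^{p+1}(X_i)}\alpha(\omega(\cdots))=\nabla^\alpha_{\Theta^{p+1}(X_i)}\Theta^*\omega(\cdots)$ yields the corresponding summand on the right. Reassembling gives $\alpha\circ d_{\nabla^\alpha}\omega=\Theta^* d_{\nabla^\alpha}\omega$.

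For the $\alpha$-Leibniz rule, the cleanest route is an induction on the total degree $p+q$. The base cases $p=0$ and $q=0$ reduce, via the Leibniz property of the connection in the $E$-slot and of the anchor on $C^\infty(M)$, to essentially the Koszul-type identity already built into the definition of $\nabla^\alpha$. The inductive step exploits the fact that any form is locally a $C^\infty(M)$-combination of wedge products of $1$-forms, so it suffices to check the rule when $\omega$ or $\eta$ has degree one, which is a direct (and finite) expansion of both sides of (\ref{der}) using (\ref{deri}). Alternatively, one expands $d_{\nabla^\alpha}(\omega\wedge\eta)$ once and splits the antisymmetrization over $p+q+1$ arguments into three blocks, indices entirely in the $\omega$-slots, entirely in the $\eta$-slots, or mixed, and verifies that the mixed terms cancel in pairs while the pure blocks assemble into $d_{\nabla^\alpha}(\omega)\Theta^*(\eta)$ and $(-1)^p\Theta^*(\omega)d_{\nabla^\alpha}(\eta)$ respectively.

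The main obstacle will be the combinatorial bookkeeping in the Leibniz step: keeping track of the different powers $\Theta^p$ and $\Theta^{p+1}$ that enter the connection terms in each degree, and ensuring that the inner $\Theta(\cdot)$ factors produced by (\ref{deri}) regroup into the outer $\Theta^*$ appearing in $\Theta^*(\eta)$ and $\Theta^*(\omega)$ on the right of (\ref{der}). This regrouping is precisely where the multiplicativity of $\Theta$ with respect to $[.,.]_A$ is used, and it is also the place where the $\alpha$-compatibility of $\omega$ and $\eta$ must be invoked to exchange $\alpha$ with wedge factors without spoiling the sign conventions.
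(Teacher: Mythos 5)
Your argument for the first identity is the same term-by-term computation the paper carries out: expand $\Theta^*d_{\nabla^{\alpha}}\omega$ via (\ref{deri}), use $[\Theta(X_i),\Theta(X_j)]_A=\Theta([X_i,X_j]_A)$, the compatibility $\alpha\circ\omega=\Theta^*\omega$, and the twist relation $\nabla^{\alpha}_{\Theta(X)}\alpha(s)=\alpha(\nabla^{\alpha}_X s)$ to match summands. For the Leibniz rule (\ref{der}) the paper only says the proof is ``similar,'' and your direct block-splitting expansion is the standard way to make that precise, so the proposal is correct and follows essentially the paper's route.
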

\begin{proof}
Let $\omega\in\Omega^p_{\alpha}(A;E)$ and $X_i\in\Gamma(A)$ for $1\leq i\leq p+1$,
\begin{eqnarray*}
&&\Theta^*(d_{\nabla^{\alpha}}\omega(X_1,\cdots, X_{p+1}))\\&=&d_{\nabla^{\alpha}}\omega(\Theta(X_1),\cdots,\Theta(X_{p+1}))\\
&=&\sum_{i<j}(-1)^{i+j}\omega([X_i,X_j]_A,\Theta^2(X_1)\cdots,\widehat{\Theta(X_i)},\cdots,\widehat{\Theta(X_j)},\cdots,\Theta^2(X_{p+1}))\\
&+&\sum_i(-1)^{i+1}\nabla^{\alpha}_{\Theta^{p+1}(X_i)}\omega(\Theta(X_1),\cdots,\widehat{\Theta(X_i)},\cdots,\Theta(X_{p+1}))
\end{eqnarray*}
\begin{eqnarray*}
&=&\sum_{i<j}(-1)^{i+j}\Theta^*\omega([X_i,X_j]_A,\Theta(X_1)\cdots,\hat{X_i},\cdots,\hat{X_j},\cdots,\Theta(X_{p+1}))\\
&+&\sum_i(-1)^{i+1}\alpha\circ\nabla^{\alpha}_{\Theta^{p}(X_i)}\Theta^*\omega(X_1,\cdots,\hat{X_i},\cdots,X_{p+1})\\
&=&\sum_{i<j}(-1)^{i+j}\alpha\circ\omega([X_i,X_j]_A,\Theta(X_1)\cdots,\hat{X_i},\cdots,\hat{X_j},\cdots,\Theta(X_{p+1}))\\
&+&\sum_i(-1)^{i+1}\alpha\circ\nabla^{\alpha}_{\Theta^{p}(X_i)}\omega(X_1,\cdots,\hat{X_i},\cdots,X_{p+1})\\
&=&\alpha\circ d_{\nabla^{\alpha}}\omega(X_1,\cdots, X_{p+1}).
\end{eqnarray*}
The proof for (\ref{der}) is similar.
\end{proof}

\begin{thm}\label{main1}
 Given a hom-Lie algebroid $A$ and a vector bundle $E$ over $M$, then there is a $1-1$
correspondence between $A$-connections $\nabla^{\alpha}$ with respect to $\alpha\in\mathcal{D}(E)$ on $E$ and degree $+1$ operators $d_{\nabla^{\alpha}}$ on
$\Omega_{\alpha}(A;E)$ which
satisfy the $\alpha$-derivation rule. Moreover, $\nabla^{\alpha}$ is a representation with respect to $\alpha$ on $E$ if and only if $d_{\nabla^{\alpha}}^2=0$.
\end{thm}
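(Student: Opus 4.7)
The plan is to handle the theorem in three stages: produce an operator from a connection, produce a connection from an operator, and then identify flatness of $\nabla^{\alpha}$ with $d_{\nabla^{\alpha}}^2=0$.

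\textbf{From connections to operators.} Given an $A$-connection $\nabla^{\alpha}$ I will take formula (\ref{deri}) as the definition of $d_{\nabla^{\alpha}}$. Lemma \ref{lemder} already supplies most of what is needed: the operator lands in $\Omega_{\alpha}(A;E)$, and it satisfies the $\alpha$-derivation rule (\ref{der}). What remains is $C^{\infty}(M)$-linearity of the right-hand side of (\ref{deri}) in each slot $X_i$; this reduces to substituting $fX_k$ and matching $\theta^{*}(f)$-terms using the connection axioms together with the hom-Leibniz identity $[X,fY]_A=\theta^{*}(f)[X,Y]_A+\mathcal{L}_{\rho(X)}(f)\Theta(Y)$.

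\textbf{From operators to connections.} Conversely, given a degree $+1$ operator $d$ on $\Omega_{\alpha}(A;E)$ satisfying the $\alpha$-derivation rule, I would define $\nabla^{\alpha}_Xs:=(ds)(X)$ for $s\in\Gamma(E)$ and $X\in\Gamma(A)$. The three defining properties of an $A$-connection then drop out of (\ref{der}). $C^{\infty}(M)$-linearity in $X$ is automatic since $ds$ is a $1$-form. The twisted Leibniz rule $\nabla^{\alpha}_X(fs)=\theta^{*}(f)\nabla^{\alpha}_Xs+\mathcal{L}_{\rho(X)}(f)s$ comes from applying (\ref{der}) to the product $f\cdot s$ together with the canonical evaluation $df(X)=\mathcal{L}_{\rho(X)}(f)$ forced on the trivial coefficient bundle. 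The $\alpha$-compatibility $\nabla^{\alpha}_{\Theta(X)}\alpha(s)=\alpha(\nabla^{\alpha}_Xs)$ is precisely $\alpha\circ ds=\Theta^{*}ds$ evaluated at $X$. That the two constructions are mutual inverses is then immediate, because an operator obeying the derivation rule is uniquely determined by its action on $0$-forms, and that action coincides by construction with the one read off from (\ref{deri}).

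\textbf{Flatness versus $d^2=0$.} For a section $s$, unwinding $d_{\nabla^{\alpha}}^2s(X,Y)$ via (\ref{deri}) yields
\[
d_{\nabla^{\alpha}}^2s(X,Y)=\nabla^{\alpha}_{\Theta(X)}\nabla^{\alpha}_Ys-\nabla^{\alpha}_{\Theta(Y)}\nabla^{\alpha}_Xs-\nabla^{\alpha}_{[X,Y]_A}s.
\]
On $\alpha$-compatible sections this coincides with $R_{\nabla^{\alpha}}(X,Y)s$, so vanishing of the curvature forces $d_{\nabla^{\alpha}}^2=0$ on $\Omega^{0}_{\alpha}(A;E)$; an induction on form degree using (\ref{der}) then propagates the vanishing to the full complex. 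Conversely, reading off $d_{\nabla^{\alpha}}^2$ on sections recovers $R_{\nabla^{\alpha}}$, so $d_{\nabla^{\alpha}}^2=0$ forces flatness.

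The main obstacle I anticipate lies in the flatness step: the curvature formula contains an $\alpha(s)$ inside the $\nabla^{\alpha}_{[X,Y]_A}\alpha(s)$-term, whereas the naive iterated expansion of $d_{\nabla^{\alpha}}^2$ produces only $\nabla^{\alpha}_{[X,Y]_A}s$. Reconciling the two requires either restricting first to $\alpha$-invariant sections (where the discrepancy vanishes) and then extending via the derivation rule, or systematically exploiting the identity $\nabla^{\alpha}_{\Theta(X)}\alpha(s)=\alpha(\nabla^{\alpha}_Xs)$ to shuttle the extra $\alpha$-twist through the computation. Either route demands careful bookkeeping of the $\Theta^{*}$-factors in (\ref{der}) to ensure that ``$d^2$ is a derivation'' is really justified in this twisted setting.
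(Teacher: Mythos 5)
Your treatment of the correspondence itself is essentially the paper's: $d_{\nabla^{\alpha}}$ is defined by (\ref{deri}), Lemma \ref{lemder} supplies the $\alpha$-derivation rule, and the inverse reads the connection off the action on $0$-forms (the paper is in fact terser than you are here and omits the converse construction entirely). Where you genuinely diverge is the equivalence of flatness with $d_{\nabla^{\alpha}}^{2}=0$: the paper expands $d_{\nabla^{\alpha}}^{2}\omega$ directly on an arbitrary $p$-form of $\Omega_{\alpha}(A;E)$ and cancels in two blocks --- (\ref{21})--(\ref{24}) via the hom-Jacobi identity and $[\Theta(X),\Theta(Y)]_A=\Theta([X,Y]_A)$, and (\ref{18})+(\ref{19})+(\ref{20}) via flatness together with $\Theta^{*}\omega=\alpha\circ\omega$ --- whereas you reduce to $0$-forms and propose to propagate by induction through the derivation rule.

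That reduction is where your argument has a real gap, and the obstacle you flag at the end is not resolved by either of the escapes you offer. On $0$-forms the membership condition $\alpha\circ\omega=\Theta^{*}\omega$ degenerates to $\alpha(s)=s$, so restricting to $\Omega^{0}_{\alpha}(A;E)$ does make $\nabla^{\alpha}_{[X,Y]_A}s$ coincide with $\nabla^{\alpha}_{[X,Y]_A}\alpha(s)$ --- but then $d_{\nabla^{\alpha}}^{2}=0$ in degree $0$ only yields $R_{\nabla^{\alpha}}(X,Y)s=0$ for $\alpha$-fixed sections, which is not flatness on all of $\Gamma(E)$, and your induction to higher degrees would need $\Omega_{\alpha}(A;E)$ to be generated over $\Omega(A)$ by such sections, which is not established and in general fails ($\alpha$ need not fix any nonzero section). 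The mechanism that actually reconciles the curvature's $\alpha(s)$ with the iterated expansion is only available in form degree $\geq 1$: in the term (\ref{20}) the coefficient being differentiated is $\omega(\Theta(X_1),\dots)=\alpha\bigl(\omega(X_1,\dots)\bigr)$, so $\nabla^{\alpha}_{\Theta^{p}[X_i,X_j]_A}$ really is applied to $\alpha(\text{coefficient})$, exactly the shape demanded by $R_{\nabla^{\alpha}}=0$ after one application of $[\Theta^{p}(X_i),\Theta^{p}(X_j)]_A=\Theta^{p}([X_i,X_j]_A)$. So you must run the computation at the level of general $p$-forms (or at the very least on $1$-forms, not on sections), as the paper does; the degree-$0$ identity you wrote down cannot serve as the base of the argument.
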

\begin{proof}
The $1-1$ correspondence arise from (\ref{deri}) and Lemma \ref{lemder}.
Now we show that $\nabla^{\alpha}$ is a representation with respect to $\alpha$ on $E$ if and only if $d_{\nabla^{\alpha}}^2=0$.
Let $\omega\in\Omega^p_{\alpha}(A;E)$ and $X_i\in\Gamma(A)$ for $1\leq i\leq p$. By straightforward computations, we have
\begin{eqnarray*}
&&d_{\nabla^{\alpha}}^2\omega(X_1,\cdots, X_{p+2})\\
&=&\sum_{i<j}(-1)^{i+j}d_{\nabla^{\alpha}}\omega([X_i,X_j]_A,\Theta(X_1)\cdots,\hat{X_i},\cdots,\hat{X_j},\cdots,\Theta(X_{p+2}))\\
&+&\sum_i(-1)^{i+1}d_{\nabla^{\alpha}}\nabla^{\alpha}_{\Theta^{p+1}(X_i)}\omega(X_1,\cdots,\hat{X_i},\cdots,X_{p+2}).
\end{eqnarray*}
Also
\begin{eqnarray}
&&d_{\nabla^{\alpha}}\nabla^{\alpha}_{\Theta^{p+1}(X_i)}\omega(X_1,\cdots,\hat{X_i},\cdots,X_{p+2})\nonumber\\
&=&\sum_{l=1}^{i-1}(-1)^{l+1}\nabla^{\alpha}_{\Theta^{p+1}(X_i)}\nabla^{\alpha}_{\Theta^{p}(X_l)}\omega(X_1,\cdots,\hat{X_l},\cdots,\hat{X_i},\cdots,X_{p+2})\label{18}\\
&+&\sum_{l=i+1}^{p+2}(-1)^{l}\nabla^{\alpha}_{\Theta^{p+1}(X_i)}\nabla^{\alpha}_{\Theta^{p}(X_l)}\omega(X_1,\cdots,\hat{X_i},\cdots,\hat{X_l},\cdots,X_{p+2})\label{19}\\
&+&\sum_{i<l<k}(-1)^{l+k}\nabla^{\alpha}_{\Theta^{p+1}(X_i)}\omega([X_l,X_k]_A,\Theta(X_1),\cdots,\widehat{X_{i,l,k}},\cdots,\Theta(X_{p+2}))\nonumber\\
&+&\sum_{l<k<i}(-1)^{l+k}\nabla^{\alpha}_{\Theta^{p+1}(X_i)}\omega([X_l,X_k]_A,\Theta(X_1),\cdots,\widehat{X_{l,k,i}},\cdots,\Theta(X_{p+2}))\nonumber\\
&+&\sum_{l<i<k}(-1)^{l+k+1}\nabla^{\alpha}_{\Theta^{p+1}(X_i)}\omega([X_l,X_k]_A,\Theta(X_1),\cdots,\widehat{X_{l,i,k}},\cdots,\Theta(X_{p+2})),\nonumber
\end{eqnarray}
and\\
\begin{eqnarray}
&&d_{\nabla^{\alpha}}\omega([X_i,X_j]_A,\Theta(X_1)\cdots,\hat{X_i},\cdots,\hat{X_j},\cdots,\Theta(X_{p+2}))\nonumber\\
&=&\nabla^{\alpha}_{\Theta^p[X_i,X_j]_A}\omega(\Theta(X_1),\cdots,\hat{X_i},\cdots,\hat{X_j},\Theta(X_{p+2}))\label{20}\\
&+&\sum_{l=1}^{i-1}(-1)^l\nabla_{\Theta^{p+1}(X_l)}\omega([X_i,X_j]_A,\Theta(X_1)\cdots,\widehat{X_{l,i,j}},\cdots,\Theta(X_{p+2}))\nonumber\\
&+&\sum_{l=i+1}^{j-1}(-1)^{l+1}\nabla_{\Theta^{p+1}(X_l)}\omega([X_i,X_j]_A,\Theta(X_1)\cdots,\widehat{X_{i,l,j}},\cdots,\Theta(X_{p+2}))\nonumber\\
&+&\sum_{l=j+1}^{p+2}(-1)^{l}\nabla_{\Theta^{p+1}(X_l)}\omega([X_i,X_j]_A,\Theta(X_1)\cdots,\widehat{X_{i,l,j}},\cdots,\Theta(X_{p+2}))\nonumber\\
&+& \sum_{l=1}^{i-1}(-1)^{l+1}\omega([[X_i,X_j]_A,\Theta(X_l)]_A,\Theta^2(X_1),\cdots,\widehat{X_{l,i,j}},\cdots,\Theta^2(X_{p+2}))\label{21}\\
&+&\sum_{l=i+1}^{j-1}(-1)^{l}\omega([[X_i,X_j]_A,\Theta(X_l)]_A,\Theta^2(X_1),\cdots,\widehat{X_{l,i,j}},\cdots,\Theta^2(X_{p+2}))\label{22}\\
&+&\sum_{l=j+1}^{p+2}(-1)^{l+1}\omega([[X_i,X_j]_A,\Theta(X_l)]_A,\Theta^2(X_1),\cdots,\widehat{X_{i,j,l}},\cdots,\Theta^2(X_{p+2}))\label{23}\\
&+&\sum_{l,k}(\pm)\omega([\Theta(X_l),\Theta(X_k)]_A,\Theta([X_i,X_j],\Theta^2(X_1),\cdots,\widehat{X_{i,j,l,k}},\cdots,\Theta^2(X_{p+2})).\qquad\quad\label{24}
\end{eqnarray}
By $\widehat{X_{i_1,\cdots,i_n}}$ we mean that we omit the items $X_{i_1},\cdots,X_{i_n}$, where $n\in\mathbb{N}$.
Whereas $[\Theta(X_l),\Theta(X_k)]_A=\Theta([X_l,X_k])$ and
by the hom-Jacobi identity, we obtain that
$$\sum_{i<j}(-1)^{i+j}(\mbox{Eq.~}\ref{24}+\mbox{Eq.~}\ref{23}+\mbox{Eq.~}\ref{22}+\mbox{Eq.~}\ref{21})=0,$$
also
$$\sum_{i=1}^{p+2}(-1)^{i+1}(\mbox{Eq.~}\ref{18}+\mbox{Eq.~}\ref{19})+\sum_{i<j}(-1)^{i+j}(\mbox{Eq.~}\ref{20})=0$$
since $\nabla^{\alpha}$ is a representation with respect to $\alpha$ on $E$ and $\Theta^*\omega=\alpha\circ\omega$.
Then $d_{\nabla^{\alpha}}^2=0$.
\end{proof}

\begin{exm}
Every regular hom-Lie algebroid $A$, $A$ has two canonical representations. They are
$\mathfrak{g}(A):=ker(\rho)$, together with the $A$-connection with respect to $\Theta$
$$\nabla^{adj,\Theta}_X (Y) = [X, Y]_A,$$
and the normal bundle $\mathcal{V}(A) := TM/\rho(A)$ of the foliation induced by $A$, together with the $A$-connection with respect to $\theta_*$
$$\nabla^{adj,\theta_*}_X (\overline{Y}) =\overline{[\rho(X),Y]},$$
where $\overline{X} = X $ mod $\rho(A)$.
\end{exm}
\section{Representation up to homotopy of hom-Lie algebroids}
Now, using the contents expressed, we introduce the main concept of this paper.
\begin{defn}
A representation up to homotopy of hom-Lie algebroid $A$ on a graded vector bundle $\varepsilon$ with respect to degree preserving operator $\alpha$ on $\varepsilon$, is a degree $1$ operator $D_{\alpha}$ on $\Omega_{\alpha}(A;\varepsilon)$ such that $D_{\alpha}^2=0$,
\begin{equation}\label{41}
\Theta^*D_{\alpha}=\alpha\circ D_{\alpha}
\end{equation} and
\begin{equation}\label{42}
D_{\alpha}(\omega\eta)=D_{\alpha}\omega\Theta^*(\eta)+(-1)^p\Theta^*\omega D_{\alpha}(\eta),
\end{equation}
for any $\omega\in\Omega^p(A)$ and $\eta\in\Omega_{\alpha}(A;\varepsilon)$.
\end{defn}
By an $\alpha$-representation up to homotopy we mean a representation up to homotopy with respect to $\alpha$.
\begin{prop}\label{upto}
There is a $1-1$ correspondence between $\alpha$-representations up to homotopy
$(\varepsilon,D_{\alpha})$ of $A$ and graded vector bundles $\varepsilon$ over $M$ endowed with:
\begin{enumerate}
\item A degree $1$ operator $\partial_{\alpha}$ on $\varepsilon$ making $(\varepsilon, \partial_{\alpha})$ a complex.
\item An $A$-connection $\nabla^{\alpha}$ on $(\varepsilon, \partial_{\alpha})$ with respect to $\alpha$.
\item An $End(\varepsilon)$-valued $2$-form $\omega_2$ of total degree $1$, i.e.
$$\omega_2\in\Omega^2(A;\underline{End}^{-1}(\varepsilon))$$
satisfying
\begin{equation}\label{43}
\omega_2(\Theta(X_1),\Theta(X_2))=\omega_2(X_1,X_2)\circ\alpha
\end{equation}
 and
 \begin{equation}\label{r}
\partial_{\alpha}(\omega_2)+R_{\nabla^{\alpha}}=0.
 \end{equation}

\item For each $p > 2$, an $End(\varepsilon)$-valued $p$-form $\omega_p$ of total degree $1$, i.e.
$$\omega_p\in\Omega^p(A;\underline{End}^{1-p}(\varepsilon))$$
satisfying
\begin{equation}\label{45}
\omega_p(\Theta(X_1),\Theta(X_2),\cdots,\Theta(X_p))=\omega_p(X_1,X_2,\cdots,X_p)\circ\alpha
\end{equation}
and
$$\partial_{\alpha}(\omega_p)+d_{\nabla^{\alpha}}(\omega_{p-1})+\omega_2\circ\omega_{p-2}+\omega_3\circ\omega_{p-3}+\cdots\omega_{p-2}\circ\omega_2=0.$$
\end{enumerate}
Moreover, the correspondence is characterized by
\begin{equation}\label{cha}
D_{\alpha}(\eta)=\partial_{\alpha}(\eta)+d_{\nabla^{\alpha}}(\eta)+\omega_2\wedge\eta+\omega_3\wedge\eta+\cdots.
\end{equation}
\end{prop}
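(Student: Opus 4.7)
The plan is to extract the structure data $(\partial_\alpha,\nabla^\alpha,\omega_2,\omega_3,\ldots)$ from a given $D_\alpha$ by decomposing it according to form degree, then to recover the hierarchy of structure equations from $D_\alpha^2=0$, and to reverse both steps for the converse. Throughout, the graded derivation rule (\ref{42}) is the workhorse: it reduces everything to $D_\alpha\an{\Gamma(\varepsilon)}$, where $\Gamma(\varepsilon)=\Omega^0_\alpha(A;\varepsilon)$.

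\textbf{Decomposition.} Write $D_\alpha=D_{(0)}+D_{(1)}+D_{(2)}+\cdots$ where $D_{(p)}$ raises the $A$-form degree by $p$ and shifts the internal grading by $1-p$, so as to land in total degree $+1$. Applying (\ref{42}) to $fs$ with $f\in C^\infty(M)$ and $s\in\Gamma(\varepsilon)$, and matching form degrees, shows that $D_{(0)}$ is $C^\infty(M)$-linear and hence a bundle endomorphism $\partial_\alpha$ of $\varepsilon$ of degree $+1$; that $D_{(1)}$ obeys the twisted Leibniz rule $D_{(1)}(fs)=(d_Af)\,s+\theta^*(f)D_{(1)}(s)$, defining an $A$-connection $\nabla^\alpha$ on $\varepsilon$ in the sense of Section 3; and that each $D_{(p)}$ with $p\geq 2$ is $C^\infty(M)$-linear on $\Gamma(\varepsilon)$ and so given by wedging with a form $\omega_p\in\Omega^p(A;\underline{End}^{1-p}(\varepsilon))$. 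The $\alpha$-compatibility (\ref{41}) then distributes across the decomposition to yield $\partial_\alpha\circ\alpha=\alpha\circ\partial_\alpha$, the $\alpha$-compatibility of $\nabla^\alpha$ from Section 3, and the identities (\ref{43}) and (\ref{45}). This is exactly the content of (\ref{cha}).

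\textbf{Unfolding $D_\alpha^2=0$.} Expanding $D_\alpha^2=\sum_n\sum_{p+q=n}D_{(p)}D_{(q)}$ and equating the form-degree-$n$ component to zero gives the listed relations: for $n=0$, $\partial_\alpha^2=0$, making $(\varepsilon,\partial_\alpha)$ a complex; for $n=1$, $\partial_\alpha\circ d_{\nabla^\alpha}+d_{\nabla^\alpha}\circ\partial_\alpha=0$, i.e.\ $\nabla^\alpha$ is a connection on the complex $(\varepsilon,\partial_\alpha)$; for $n=2$, reading off the action on $\Gamma(\varepsilon)$ one obtains the curvature relation $R_{\nabla^\alpha}+\partial_\alpha(\omega_2)=0$, which is (\ref{r}); and for $n=p\geq 3$ the recursive structure equation $\partial_\alpha(\omega_p)+d_{\nabla^\alpha}(\omega_{p-1})+\sum_{i=2}^{p-2}\omega_i\circ\omega_{p-i}=0$. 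Conversely, given data $(\partial_\alpha,\nabla^\alpha,\{\omega_p\})$ satisfying (1)--(4), the operator defined by (\ref{cha}) satisfies (\ref{42}) because each summand satisfies its own derivation identity (for $d_{\nabla^\alpha}$ this is Lemma \ref{lemder}, for $\partial_\alpha$ and the $\omega_p\wedge\cdot$ it is immediate), satisfies (\ref{41}) by the $\alpha$-compatibility hypotheses, and has square zero by the same expansion run in reverse.

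\textbf{Main obstacle.} The delicate step is justifying the initial decomposition in the hom-setting: one must show from (\ref{42}) alone that the higher form-degree components $D_{(p)}\an{\Gamma(\varepsilon)}$ for $p\geq 2$ are automatically $C^\infty(M)$-linear, that $D_{(1)}$ obeys the twisted Leibniz rule with $\theta^*(f)$ rather than $f$, and that (\ref{41}) distributes component-by-component to produce (\ref{43}) and (\ref{45}). Once the unique decomposition (\ref{cha}) is secured, the identification of $D_\alpha^2=0$ with the hierarchy of structure equations is a formal calculation that mirrors the classical Arias Abad--Crainic argument for Lie algebroids, with the twists by $\Theta^*$ and $\alpha$ absorbed into the machinery of Section 3 and Lemma \ref{lemder}.
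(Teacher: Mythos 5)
Your proposal follows essentially the same route as the paper's proof: decompose $D_{\alpha}$ by form degree using the derivation rule (\ref{42}), identify the degree-$0$ component as $\partial_{\alpha}$, the degree-$1$ component as an $A$-connection via Theorem \ref{main1}, and the higher components as wedge products with forms $\omega_p$, then read the structure equations off $D_{\alpha}^2=0$. You in fact spell out the hierarchy of equations arising from $D_{\alpha}^2=0$ and the converse construction more explicitly than the paper does, and you correctly flag the twisted linearity of the components in the hom-setting as the point requiring care (a point the paper itself glosses over by citing the untwisted Lemma A.1 of Abad--Crainic).
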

\begin{proof}
Owing to (\ref{42}) and the fact that $\Omega_{\alpha}(A;\varepsilon)$ is generated as an $\Omega(A)$-module by $\Gamma(\varepsilon)$, the operator $D_{\alpha}$ will be uniquely determined by what it does on $\Gamma(\varepsilon)$ it will send each $\Gamma(\varepsilon^k)$ into the sum
$$\Gamma(\varepsilon^{k+1})\oplus\Gamma^1_{\alpha}(A;\varepsilon^k)\oplus\Gamma^2(A;\varepsilon^{k-1})\oplus\cdots.$$
Then it will also send each $\Omega^p_\alpha(A;\varepsilon^k)$ into the sum
$$\Omega^p_{\alpha}(A;\varepsilon^{k+1})\oplus\Omega^{p+1}_{\alpha}(A;\varepsilon^k)\oplus\Omega_{\alpha}^{p+2}(A;\varepsilon^{k-1})\oplus\cdots,$$
and we denote by $D_0,D_1,\cdots$ the components of $D$. By (\ref{42}), we deduce that each $D_i$ for $i\neq 1$ is a (graded) $\Omega(A)$-linear map and, by \cite[Lemma A.1]{2} it is the wedge product with an element in $\Omega_{\alpha}(A;End(\varepsilon))$. On the other hand $D_1$ satisfies the $\alpha$-derivation rule on each the vector bundle $\varepsilon^k$ and by Theorem \ref{main1}.
it comes from $A$-connections on these bundles. the equations (\ref{43}), (\ref{r}) and (\ref{45}) correspond to (\ref{41}) and $D^2=0$.
\end{proof}
A morphism $\varphi:\varepsilon_1\rightarrow\varepsilon_2$ between $\alpha$-representation up to homotopy $(\varepsilon_1,D_{\alpha})$ and $\beta$-representation up to homotopy $(\varepsilon_2,D_{\beta})$ of hom-Lie algebroid $A$ is a degree zero $\Omega(A)$-linear map
$$\varphi:\Omega_{\alpha}(A;\varepsilon_1)\rightarrow\Omega_{\beta}(A;\varepsilon_2)$$
which commutes with $\alpha$ and $\beta$ and the structure differentials $D_{\alpha}$ and $D_{\beta}$.
\begin{cor}
Any $\alpha$-representation up to homotopy $(\varepsilon,D_{\alpha})$ is isomorphic whit $\alpha$-representation up to homotopy $(\varepsilon,\overline{D_{\alpha}})$, which
$$\overline{D_{\alpha}}(\eta)=-\partial_{\alpha}(\eta)+d_{\nabla^{\alpha}}(\eta)-\omega_2\wedge\eta+\omega_3\wedge\eta+\cdots,$$
where $D_{\alpha}$ is characterized as (\ref{cha}).
\end{cor}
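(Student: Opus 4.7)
The strategy is to exhibit the isomorphism explicitly as an $\varepsilon$-degree sign operator and then to check that conjugation by it transforms $D_{\alpha}$ into $\overline{D_{\alpha}}$. Concretely, I define
$$\varphi:\Omega_{\alpha}(A;\varepsilon)\longrightarrow\Omega_{\alpha}(A;\varepsilon),\qquad \varphi(\eta):=(-1)^{k}\eta\ \text{for}\ \eta\in\Omega^{p}_{\alpha}(A;\varepsilon^{k}),$$
so that the sign depends only on the $\varepsilon$-degree $k$, not on the form degree $p$. A short verification then shows that $\varphi$ qualifies as a morphism in the sense defined just before the corollary: it is of total degree zero, it is $\Omega(A)$-linear because wedge multiplication by elements of $\Omega(A)$ leaves the $\varepsilon$-grading untouched, and it commutes with $\alpha$ since $\alpha$ is degree-preserving on $\varepsilon$. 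Moreover $\varphi^{2}=\id$, so $\varphi$ is its own inverse.

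The heart of the argument is the intertwining identity $\varphi\circ D_{\alpha}=\overline{D_{\alpha}}\circ\varphi$. Writing $\omega_{0}:=\partial_{\alpha}$ and $\omega_{1}:=d_{\nabla^{\alpha}}$ for uniformity, Proposition~\ref{upto} presents $D_{\alpha}$ as the sum $\sum_{p\geq 0}\omega_{p}\wedge(\cdot)$, where $\omega_{p}$ shifts the $\varepsilon$-degree by $1-p$. For $\eta\in\Omega^{q}_{\alpha}(A;\varepsilon^{k})$ we have $\omega_{p}\wedge\eta\in\Omega^{p+q}_{\alpha}(A;\varepsilon^{k+1-p})$, and hence
$$\varphi(\omega_{p}\wedge\eta)=(-1)^{k+1-p}\,\omega_{p}\wedge\eta=(-1)^{p+1}\,\omega_{p}\wedge\varphi(\eta).$$
Summing over $p$ produces $\varphi\circ D_{\alpha}\circ\varphi^{-1}=-\partial_{\alpha}+d_{\nabla^{\alpha}}-\omega_{2}\wedge\cdot+\omega_{3}\wedge\cdot-\cdots$, which is exactly $\overline{D_{\alpha}}$.

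Because $\varphi$ is an invertible morphism, the identities $D_{\alpha}^{2}=0$, $\Theta^{*}D_{\alpha}=\alpha\circ D_{\alpha}$ and the $\alpha$-derivation rule transfer automatically to $\overline{D_{\alpha}}$, confirming that $(\varepsilon,\overline{D_{\alpha}})$ really is an $\alpha$-representation up to homotopy; one need not re-verify the Proposition~\ref{upto} structure equations from scratch. I do not foresee a serious obstacle: the whole proof rests on the single parity computation $(-1)^{k+1-p}(-1)^{k}=(-1)^{p+1}$, and the only point requiring care is matching this alternating pattern with the signs $-,+,-,+,\dots$ displayed in the statement for $p=0,1,2,3$.
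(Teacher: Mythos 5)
Your proposal is correct and follows exactly the paper's approach: the paper also takes $\varphi$ to be $(-1)^k$ on $\varepsilon^k$ and asserts the intertwining identity $\overline{D_{\alpha}}\circ\varphi=\varphi\circ D_{\alpha}$, which your parity computation $(-1)^{k+1-p}=(-1)^{p+1}(-1)^{k}$ makes explicit where the paper merely says ``it is easy to show.'' Your verification that $\varphi$ commutes with $\alpha$ because $\alpha$ preserves the $\varepsilon$-grading likewise matches the paper's argument.
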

\begin{proof}
Let $\varphi:\Omega_{\alpha}(A;\varepsilon)\rightarrow\Omega_{\alpha}(A;\varepsilon)$ equal to $(-1)^k$ on $\varepsilon^k$
and $\eta\in \Omega_{\alpha}(A;\varepsilon)$,
it is easy to show that
$$\overline{D_{\alpha}}(\varphi(\eta))=\varphi(D_{\alpha}(\eta)).$$
Moreover, $\varphi\circ\alpha=\alpha\circ\varphi$, since $\alpha$ is a degree preserving operator on $\varepsilon$.
\end{proof}
\begin{exm}
Any representation $E$ of hom-Lie algebroid $A$ with respect to $\alpha\in\mathcal{D}(E)$ can be seen
as a representation up to homotopy with respect to $\alpha$ concentrated in degree zero.
\end{exm}
\begin{exm}
Let $\alpha\in\mathcal{D}(M\times\mathbb{R})$ and $\omega\in\Omega_{\alpha}^n(A)$ be a closed $n$-form such that
$\Theta^*\omega=\alpha\circ\omega.$
Then $\omega$ induces a representation up to homotopy on the complex which is the trivial line bundle in degrees $0$ and $n-1$, and
zero otherwise. The structure operator is $\nabla+\omega$, where $\nabla$ is the flat connection on
the trivial line bundle. If $\omega$ and $\omega'$ are cohomologous, then the resulting representations up
to homotopy are isomorphic with isomorphism defined by $Id+\theta$
where $\omega-\omega'=d\theta$.
\end{exm}
\begin{thm}\label{up}
For any $\alpha$-representation up to homotopy of length one with vector bundles
$E$ in degree $0$ and $F$ in degree $1$ and structure operator $D_{\alpha} = \partial+\nabla^{\alpha}+K$, there is an extension of hom-Lie algebroids
$$Hom(E,F)\rightarrow\tilde{A}\rightarrow A,$$
where
\begin{enumerate}
\item $Hom(E,F)$ is a bundle of Lie algebras with bracket $$[f,g]_{\partial}=f\partial g-g\partial f.$$
\item $\tilde{A}=Hom(E,F)\oplus A$ with anchor $(f,x)\mapsto\rho(x)$, hom function $(Id,\Theta)$ and bracket
$$[(f,X),(g,Y)]=([f,g]_{\partial}+\nabla^{\alpha}_Xf-\nabla^{\alpha}_Yg+K(X,Y),[X,Y]_A).$$
\end{enumerate}
\end{thm}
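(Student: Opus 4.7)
The plan is to verify the three assertions of the theorem in order: first that $(\Hom(E,F),[\cdot,\cdot]_{\partial})$ is a bundle of Lie algebras, then that the data on $\tilde{A}=\Hom(E,F)\oplus A$ defines a hom-Lie algebroid, and finally that the obvious inclusion and projection form an extension. The only non-trivial input throughout is the collection of structure equations of a length-one $\alpha$-representation up to homotopy supplied by Proposition \ref{upto}. In the length-one setting these equations reduce to $\partial_{\alpha}^{2}=0$ (automatic since $\varepsilon$ is concentrated in degrees $0$ and $1$), compatibility of $\nabla^{\alpha}$ with $\partial_{\alpha}$, the curvature identity $\partial_{\alpha}(K)+R_{\nabla^{\alpha}}=0$ from (\ref{r}), and $d_{\nabla^{\alpha}}(K)=0$, since $\omega_{p}$ vanishes for $p\ge 3$ as $\uEnd^{1-p}(E\oplus F)=0$ in those degrees.

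For the first claim, skew-symmetry of $[f,g]_{\partial}=f\partial g-g\partial f$ is manifest, and a direct expansion of the six terms of $[[f,g]_{\partial},h]_{\partial}$ summed cyclically shows they cancel in pairs purely by associativity of composition. For the second claim, the hom-Leibniz identity for the bracket on $\tilde{A}$ with anchor $\bar{\rho}(f,X)=\rho(X)$ follows by treating the three contributions to the bracket separately: the $A$-component reproduces the hom-Leibniz rule on $A$, the $\nabla^{\alpha}$-terms reproduce the connection axiom on the bundle $\Hom(E,F)$ (with induced connection $(\nabla^{\alpha}_X f)(e)=\nabla^{\alpha}_X(f(e))-f(\nabla^{\alpha}_X e)$), and the $K$-term is $C^{\infty}(M)$-linear as a differential form. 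Compatibility with the hom function $(\id,\Theta)$ reduces similarly to the hom function axiom on $A$ together with the $\Theta$-equivariance conditions (\ref{43}) and (\ref{45}) satisfied by $\nabla^{\alpha}$ and $K$.

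The main obstacle is the hom-Jacobi identity for the bracket on $\tilde{A}$. Writing out the cyclic sum $[[(f,X),(g,Y)],(h,\Theta(Z))]+\text{cyc.}$, the $A$-component is exactly the hom-Jacobi identity on $A$. The $\Hom(E,F)$-component breaks into four families of terms. Those containing only $[\cdot,\cdot]_{\partial}$ vanish by the Jacobi identity of the first step. Those containing $\nabla^{\alpha}_{[\cdot,\cdot]_A}$ against iterated $\nabla^{\alpha}$ assemble cyclically into the curvature $R_{\nabla^{\alpha}}$ acting on $f,g,h$. Those mixing $\nabla^{\alpha}$ with $K$ assemble into $d_{\nabla^{\alpha}}(K)(X,Y,Z)$ acting on $f,g,h$. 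Those mixing $\partial_{\alpha}$ with $K$ give $(\partial_{\alpha}K)(X,Y,Z)$, again acting linearly. The vanishing of the total now follows from the two structure equations $\partial_{\alpha}(K)+R_{\nabla^{\alpha}}=0$ and $d_{\nabla^{\alpha}}(K)=0$, combined with the $\Theta$-compatibilities (\ref{43}) and (\ref{45}), which ensure that the $\Theta$-twists introduced by the outer bracket align correctly with those already present in $\nabla^{\alpha}$ and $K$. Careful bookkeeping of these $\Theta$ appearances is the delicate part of the argument.

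Finally, the inclusion $\iota\colon\Hom(E,F)\to\tilde{A}$, $f\mapsto(f,0)$, and projection $\pi\colon\tilde{A}\to A$, $(f,X)\mapsto X$, are vector bundle morphisms compatible with the hom functions and the anchors (with $\Hom(E,F)$ carrying the zero anchor). Inspection of the bracket shows $\pi$ intertwines brackets, while $[(f,0),(g,Y)]=([f,g]_{\partial}-\nabla^{\alpha}_Y f,\,0)$ has zero $A$-component, so $\iota$ identifies $\Hom(E,F)$ with an ideal of $\tilde{A}$. This yields the desired extension $\Hom(E,F)\to\tilde{A}\to A$ of hom-Lie algebroids.
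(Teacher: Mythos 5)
Your proposal is correct and takes essentially the same route as the paper: the heart of both arguments is expanding the cyclic sum for the hom-Jacobi identity on $\tilde{A}$ and cancelling the resulting groups of terms against the Jacobi identity for $[\cdot,\cdot]_{\partial}$, the compatibility of $\partial$ with $\nabla^{\alpha}$, the structure equation $\partial_{\alpha}(K)+R_{\nabla^{\alpha}}=0$, and the closedness $d_{\nabla^{\alpha}}(K)=0$. The paper's proof carries out only this Jacobi computation, so your additional checks (anti-symmetry and Jacobi for $[\cdot,\cdot]_{\partial}$, the hom-Leibniz rule, compatibility with $(\mathrm{Id},\Theta)$, and the ideal/extension structure) make the argument more complete without changing its method.
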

\begin{proof}
It is enough to find that the Jacobi identity for the bracket of $\tilde{A}$.
\begin{align}
&[(f,\Theta(X)),[(g,Y),(h,Z)]]+[(h,\Theta(Z)),[(f,X),(g,Y)]]\nonumber\\
&+[(g,\Theta(Y)),[(h,X),(h,Z)]]\nonumber\\
&=[(f,\Theta(X)),([g,h]_{\partial}+\nabla^{\alpha}_Yh-\nabla^{\alpha}_Zg+K(Y,Z),[Y,Z]_A)]\nonumber\\
&+[(h,\Theta(Z)),([f,g]_{\partial}+\nabla^{\alpha}_Xg-\nabla^{\alpha}_Yf+K(X,Y),[X,Y]_A)]\nonumber\\
&+[(g,\Theta(Y)),([h,f]_{\partial}+\nabla^{\alpha}_Zf-\nabla^{\alpha}_Xh+K(Z,X),[Z,X]_A)]\nonumber
\end{align}
\begin{align}
&=\Bigg([f,[g,h]_{\partial}]_{\partial}+[f,\nabla^{\alpha}_Yh]_{\partial}-[f,\nabla^{\alpha}_Zg]_{\partial}+[f,K(Y,Z)]_{\partial}\nonumber\\
&+\nabla^{\alpha}_{\Theta(X)}[g,h]+\nabla^{\alpha}_{\Theta(X)}\nabla^{\alpha}_Yh-\nabla^{\alpha}_{\Theta(X)}\nabla^{\alpha}_Zg
+\nabla^{\alpha}_{\Theta(X)}K(Y,Z)\nonumber\\
&-\nabla^{\alpha}_{[Y,Z]}f+K(\Theta(X),[Y,Z])\mbox{{\LARGE ,}}[\Theta(X),[Y,Z]_A]_A\Bigg)\nonumber\\
&+\Bigg([h,[f,g]_{\partial}]_{\partial}+[h,\nabla^{\alpha}_Xg]_{\partial}-[h,\nabla^{\alpha}_Yf]_{\partial}+[h,K(X,Y)]_{\partial}\nonumber\\
&+\nabla^{\alpha}_{\Theta(Z)}[f,g]+\nabla^{\alpha}_{\Theta(Z)}\nabla^{\alpha}_Xg-\nabla^{\alpha}_{\Theta(Z)}\nabla^{\alpha}_Yf
+\nabla^{\alpha}_{\Theta(Z)}K(X,Y)\nonumber\\
&-\nabla^{\alpha}_{[X,Y]}f+K(\Theta(Z),[X,Y])\mbox{{\LARGE ,}}[\Theta(Z),[X,Y]_A]_A\Bigg)\nonumber\\
&=\Bigg([g,[h,f]_{\partial}]_{\partial}+[g,\nabla^{\alpha}_Zf]_{\partial}-[g,\nabla^{\alpha}_Xh]_{\partial}+[g,K(Z,X)]_{\partial}\nonumber\\
&+\nabla^{\alpha}_{\Theta(Y)}[h,f]+\nabla^{\alpha}_{\Theta(Y)}\nabla^{\alpha}_Zf-\nabla^{\alpha}_{\Theta(Y)}\nabla^{\alpha}_Xh
+\nabla^{\alpha}_{\Theta(Y)}K(Z,X)\nonumber\\
&-\nabla^{\alpha}_{[Z,X]}g+K(\Theta(Y),[Z,X])\mbox{{\LARGE ,}}[\Theta(Y),[Z,X]_A]_A\Bigg)\nonumber\\
&=\Bigg([f,[g,h]_{\partial}]_{\partial}+[h,[f,g]_{\partial}]_{\partial}+[g,[h,f]_{\partial}]_{\partial}\label{1}\\
&+\nabla^{\alpha}_{\Theta(X)}[g,h]_{\partial}+[h,\nabla^{\alpha}_Xg]_{\partial}-[g,\nabla^{\alpha}_Xh]_{\partial}\label{2}\\
&+\nabla^{\alpha}_{\Theta(Z)}[f,g]_{\partial}+[g,\nabla^{\alpha}_Zf]_{\partial}-[f,\nabla^{\alpha}_Zg]_{\partial}\label{3}\\
&+\nabla^{\alpha}_{\Theta(Y)}[h,f]_{\partial}+[f,\nabla^{\alpha}_Yh]_{\partial}-[h,\nabla^{\alpha}_Yf]_{\partial}\label{4}\\
&+[f,K(Y,Z)]_{\partial}-\nabla^{\alpha}_{[Y,Z]_A}f+\nabla^{\alpha}_{\Theta(Y)}\nabla^{\alpha}_Zf-\nabla^{\alpha}_{\Theta(Z)}\nabla^{\alpha}_Yf\label{5}\\
&+[h,K(X,Y)]_{\partial}-\nabla^{\alpha}_{[X,Y]_A}h+\nabla^{\alpha}_{\Theta(X)}\nabla^{\alpha}_Yh-\nabla^{\alpha}_{\Theta(Y)}\nabla^{\alpha}_Xh\label{6}\\
&+[g,K(Z,X)]_{\partial}-\nabla^{\alpha}_{[Z,X]_A}g+\nabla^{\alpha}_{\Theta(Z)}\nabla^{\alpha}_Xg-\nabla^{\alpha}_{\Theta(X)}\nabla^{\alpha}_Zg\label{7}\\
&+K(\Theta(X),[Y,Z]_A)+K(\Theta(Z),[X,Y]_A)+K(\Theta(Y),[Z,X]_A)\label{8}\\
&+\nabla^{\alpha}_{\Theta(X)}K(Y,Z)+\nabla^{\alpha}_{\Theta(Z)}K(X,Y)+\nabla^{\alpha}_{\Theta(Y)}K(Z,X)\label{9}\\
&\mbox{{\LARGE ,}}[\Theta(X),[Y,Z]_A]_A+[\Theta(Z),[X,Y]_A]_A+[\Theta(Y),[Z,X]_A]_A\Bigg)\label{10}
\end{align}
(\ref{1}) and (\ref{10}) are vanished by Jacobi identity and hom-Jacobi identity, (\ref{2}), (\ref{3}) and (\ref{4}) are zero by compatibility
of $\partial$ and $\nabla^{\alpha}$.
(\ref{5}), (\ref{6}) and (\ref{7}) are vanished by (\ref{r}), at long last $\mbox{Eq. \ref{8}}+\mbox{Eq. \ref{9}}=0$ since $K$ is close.
\end{proof}
\section*{Authors contributions}
This work was carried out by the two authors, in collaboration. All authors participated in the research of this work and performed equally. S. Merati wrote the paper and M. R. Farhangdoost edited it.
All authors read and approved the final manuscript.
\subsection*{Acknowledgements}
The authors would like to thank Shiraz University, for financial support, which leads to the formation of this manuscript.
This research is supported by grant no. 92grd1m82582 in Shiraz University, Shiraz, Iran.
\subsection*{Compliance with ethical guidelines}
\textbf{Competing interests} The authors declare that they have no competing interests.


\begin{thebibliography}{99}

\bibitem{2} C. A. Abad, M. Crainic.
\emph{Representations up to homotopy of Lie algebroids},
J. reine angew. Math. 663, 2012, 91-126.
\bibitem{9} M. Crainic,
R. L. Fernandes.
\emph{Lectures on integrability of Lie
brackets},
Geom. Topol. Monogr. 17, 2011, 1–107.

\bibitem{11} C. L. Gengoux, J. Teles.
\emph{Hom-Lie algebroids},
J. Geom. Phys. 68, 2013, 69-75.
\bibitem{10}J. Hartwig, D. Larsson, S. Silvestrov.
\emph{Deformations of Lie algebras using $\sigma$-derivations},
J. Algebra 295, 2006, 314-361.

\bibitem{12} A. Kitouni, A. Makhlouf, S. Silvestrov.
\emph{On (n+1)-Hom-Lie algebras induced by n-Hom-Lie algebras},
Georgian. Math. J. 23(1), 2016, 75-95.
\bibitem{1} K. C. H. Mackenzie.
\emph{General theory of Lie groupoids and Lie algebroids},
London Mathematical Society Lecture Note Series. Cambridge University Press, 213, Cambridge, 2005.

\bibitem{3} K. C. H. Mackenzie.
\emph{Double Lie algebroids and second-order geometry I},
Adv. Math. 94(2), 1992, 180-239.

\bibitem{6} K. C. H. Mackenzie.
\emph{Double Lie algebroids and second-order geometry II},
Adv. Math. 154, 2000, 46-75.
\bibitem{8} A. Makhlouf, S. Silvestrov.
\emph{Hom-algebra structures},
J. Gen. Lie Theory Appl. 2(2), 2008, 51-64.

\bibitem{4} R. A. Mehta.
\emph{Lie algebroid modules and representations up to
homotopy},
Indag. Math. 25, 2014, 1122-1134.

\bibitem{7} Y. Sheng.
\emph{Representations of hom-Lie algebras},
Algebr. Represent. Theor. 15, 2012, 1081-1098.
\bibitem{5} A. Yu. Va$\check{i}$ntrob.
\emph{Lie algebroids and homological vector fields},
Uspekhi Mat. Nauk. 52(2(314)), 1997, 161-162.

\end{thebibliography}
\end{document}